\tikzset{join/.code=\tikzset{after node path={%
\ifx\tikzchainprevious\pgfutil@empty\else(\tikzchainprevious)%
edge[every join]#1(\tikzchaincurrent)\fi}}}
\tikzset{>=stealth',every on chain/.append style={join},
         every join/.style={->}}
\newtheorem{theorem}{Theorem}[section]
\newtheorem{lemma}[theorem]{Lemma}
\newtheorem{corollary}[theorem]{Corollary}
\theoremstyle{definition}
\title{Strong Homology Theory of Continuous Maps {\footnote{The author was supported by grant FR/233/5-103/14 from Shota Rustaveli National Science Foundation (SRNSF)}}}
\date{\vspace{-5ex}}
\author{Anzor Beridze and Vladimer Baladze}
\affil{Department of Mathematics\\Batumi Shota Rustaveli  State University}
\providecommand{\keywords}[1]{\textbf{\textit{Keywords and Phrases:}} #1}
\begin{document}
\maketitle

\begin{abstract}The current work is motivated by the papers \cite{16}, \cite{18}, \cite{1901}, \cite{1902}. In particular, using Theorem 3.7 of \cite{16} and methods developed in this paper, the spectral and strong homology groups of continuous maps were defined and studied \cite{18}, \cite{1901}, \cite{1902}. In this paper is shown that strong homology groups of continuous maps are a homology type functor, which is a strong shape invariant and has the semi-continuous property. Besides, the new axioms and the conjecture of the uniqueness of the constructed functor is formulated. 
\end{abstract}
\keywords{Stong homology, strong shape, coherent morphism, coherent homotopy, Milnor sequence, semi-continuity.}

\section*{Introduction}

As is known, the singular homology theory is an exact homology theory but having no continuity property, while the $\check{C}$ech homology
theory is a continuous homology but not exact in general. The strong (Steenrod) homology theory is an exact homology theory that possesses the modified continuity, the so-called semi-continuity property \cite{80}, \cite{9001}.

The $\check{C}$ech and Steenrod homology theories are related through
a Milnor exact sequence \cite{80}, \cite{9001}. The analogous result is obtained for strong homology groups of continuous maps in the paper \cite{1902}. The aim of the present paper is to show that the strong homology groups of continuous maps are a homology type functor, which is a strong shape invariant, has the semi-continuous property and is related through a Milnor exact sequence to the $\check{C}$ech homology functor of continuous maps. Besides, we will formulate the new axiomatic system for the constructed theory and the conjecture on the uniqueness theorem.

\section{Some Facts and Notations}

Throughout the paper the following notation is used:

\begin{itemize}
\item $\mathbf{Top_{CM}}$ - the category of compact metric spaces and continuous
maps;
\item $\mathbf{MO}{{\mathbf{R}}_{\mathbf{Top_{CM}}}}$ - the category of morphisms of the category $\mathbf{Top_{CM}}$;
\item $ \mathbf p :f\to \mathbf f$\textbf{ }- a strong expansion of a continuous map $f:X \to Y$ \cite{301};
\item $(\varphi ,{\varphi }'):\mathbf{f}\to \mathbf{f}'$ - a coherent mapping of inverse sequences \cite{301};
\item $[(\varphi ,{\varphi }')]:\mathbf{f}\to \mathbf{f}'$ - the coherent homotopy class of a coherent mapping $(\varphi ,{\varphi }'):\mathbf{f}\to \mathbf{f}'$ \cite{301};
\item The coherent homotopy category $\mathbf{CH}(\mathbf{tow}-\mathbf{Mo}{{\mathbf{r}}_{\mathbf{CM}}})$ - category of all inverse sequences of continuous maps of compact metric spaces and coherent homotopy classes $[(\mathbf{\varphi },\mathbf{{\varphi }'})]$ of a coherent morphisms $(\mathbf{\varphi },\mathbf{{\varphi }'})$ \cite{301};
\item  $\mathbf{CH}(\mathbf{tow}-\mathbf{Mo}{{\mathbf{r}}_{\mathbf{ANR}}})$ - the full subcategory of the category $\mathbf{CH}(\mathbf{tow}-\mathbf{Mo}{{\mathbf{r}}_{\mathbf{CM}}})$, the objects of which are inverse sequences of $\text{ANR}$-maps \cite{301};
\item The strong fiber shape category $\mathbf{SSh}(\mathbf{Mo}{{\mathbf{r}}_{\mathbf{CM}}})$ - the category of all continuous maps of compact metric spaces and all strong shape morphisms \cite{301};
\item $H_n(f)$ - the spectral homology gruop of continuous map $f:X \to Y$ \cite{18};
\item $\mathbf{S}:\mathbf{MO}{{\mathbf{R}}_{\mathbf{Top_{CM}}}}\to \mathbf{SSh}(\mathbf{MO}{{\mathbf{R}}_{\mathbf{Top_{CM}}}})$ - the strong fiber shape functor \cite{301};
\item $Ch$ - the category of chain complexes and chain maps;
\item $Mor_{Ch}$ - the category of chain maps and morphisms of chain
maps;
\item The chain cone $C_{*}(f_{\#})$  of a chain map $f_{\#}:L_{*}\to M_{*}$, whose definition differs somewhat from the standard definition, i.e., $C_{*}(f_{\#})=\lbrace C_{n}(f_{\#}),{\partial }\rbrace $ is the chain complex, where $C_{n}(f_{\#})\approx L_{n-1} \oplus M_{n}, \forall n \in \mathbb{N}$ and ${\partial}(l,m)=(\partial (l),-\partial (m)+f_{\#}(l)), $ $\forall (l,m)\in C_{n}(f_{\#})$;
\item A coherent morphism \textbf{ }$\Phi : f_{\#}\to g_{\#}$ of chain
maps: $\Phi =\lbrace (\phi ^{1},\phi ^{2}),\phi ^{1,2}\rbrace $ is
a system, where $\phi ^{1}:L_{*}\to P_{*}$ and $\phi ^{2}:M_{*}\to Q_{*}$ are chain maps and $\phi ^{1,2}:L_{*}\to Q_{*}$ is a chain homotopy of the chain maps $g_{\#}\phi ^{1}$ and $\phi ^{2}f_{\#}$ \cite{1901};

\item The chain map $\Phi _{\#}:C_{*}( f_{\#})\to C_{*}(g_{\#})$ induced by a coherent morphism $\Phi : f_{\#}\to g_{\#}$: $\Phi _{\#}$ is defined by the formula
\end{itemize}
\begin{center} $\Phi _{\#}(l,m)=(\phi ^{1}(l),\phi ^{2}(m)+\phi ^{1,2}(l) )$
\end{center}

\cite{1901};

\begin{itemize}
\item A coherent homotopy $D=\lbrace (D^{1},D^{2}),D^{1,2}\rbrace $ of
coherent morphisms $\Phi =\lbrace (\phi ^{1},\phi ^{2}),\phi
^{1,2}\rbrace $ and $\Psi =\lbrace (\psi ^{1},\psi ^{2}),\psi
^{1,2}\rbrace $: $D=\lbrace (D^{1},D^{2}),D^{1,2}\rbrace $ is a system,
where $D^{1}$ is a chain homotopy of $\phi ^{1}$ and $\psi ^{1}$, $
D^{2}$ is also a chain homotopy of $\phi ^{2}$ and $\psi ^{2}$ and $
D^{1,2}:L_{*}\to Q_{*}$ is a chain map of degree two, which satisfies the
following conditions:
\end{itemize}
\begin{center}$\partial D^{1,2}-D^{1,2}\partial =g_{\#}D^{1}-D^{2}f_{\#}+\phi ^{1,2}-\psi
^{1,2}$\\
\end{center}

\cite{1901};

\begin{itemize}
\item The chain homotopy $D_{\#}:C_{*}( f_{\#})\to C_{*}(g_{\#})$ induced by a coherent homotopy $D$ of coherent morphisms $\Phi $ and $\Psi $
: $D_{\#}$ is defined by the formula
\end{itemize}
\begin{center}$D_{\#}(l,m)=(D^{1}(l),-D^{2}(m)+D^{1,2}(l) )$
\end{center} 

\cite{1901}.

\section{Strong Homology Functor}

\ \

First of all, we will define the strong homology functor from the category $\mathbf{CH}(\mathbf{tow}-\mathbf{Mo}{{\mathbf{r}}_{\mathbf{ANR}}})$ to the category $\mathbf{Ab}$  of direct sequences of Abelian groups. Let $\mathbf{f}=\{{{f}_{i}},({{p}_{i,i+1}},{p'}_{i,i+1}),\mathbb{N}\}:\mathbf{X \to X'}$ be any object of the category $\mathbf{CH}(\mathbf{tow}-\mathbf{Mo}{{\mathbf{r}}_{\mathbf{ANR}}})$. Let $f_{i}^{\#}:{{S}_{*}}({{X}_{i}})\to {{S}_{*}}(X_{i}^{'})$ be the singular chain map induced by ${{f}_{i}}:{{X}_{i}}\to X_{i}^{'}$. Consider the pair $(p_{i,i+1}^{\#},{p'}_{i,i+1}^{\#})$, where $p_{i,i+1}^{\#}:{{S}_{*}}({{X}_{i+1}})\to {{S}_{*}}({{X}_{i}})$ and ${p'}_{i,i+1}^{\#}:{{S}_{*}}(X_{i+1}^{'})\to {{S}_{*}}(X_{i}^{'})$ are the chain maps induced by ${{p}_{i,i+1}}:{{X}_{i+1}}\to {{X}_{i}}$ and ${p'}_{i,i+1}:X_{i+1}^{'}\to X_{i}^{'}$, respectively. For the morphism  $({{p}_{i,i+1}},{p'}_{i,i+1}):{{f}_{i+1}}\to {{f}_{i}}$ we have
$${{f}_{i}}\cdot {{p}_{i,i+1}}={p'}_{i,i+1}\cdot {{f}_{i+1}}.$$                                        
So $f_{i}^{\#}\cdot p_{i,i+1}^{\#}={p'}_{i,i+1}^{\#}\cdot f_{i+1}^{\#}$. Therefore $(p_{i,i+1}^{\#},{p'}_{i,i+1}^{\#}):f_{i+1}^{\#}\to f_{i}^{\#}$ is a morphism of chain maps. So we obtain the following inverse sequences of chain complexes
$${{S}_{*}}(\mathbf{X})=\{ {{S}_{*}}({{X}_{i}}),p_{i,i+1}^{\#}, \mathbb{N} \} ,$$                                  
 $${{S}_{*}}(\mathbf{{X}'})=\{ {{S}_{*}}(X_{i}^{'}),{p'}_{i,i+1}^{\#}, \mathbb{N} \} ,$$                                
 $${{C}_{*}}(\mathbf{f})=\{ {{C}_{*}}(f_{i}^{\#}),(p_{i,i+1}^{\#},{p'}_{i,i+1}^{\#}), \mathbb{N} \},$$                        
where ${{C}_{*}}(f_{i}^{\#})$ is the chain cone of the chain map $f_{i}^{\#}:{{S}_{*}}({{X}_{i}})\to {{S}_{*}}(X_{i}^{'})$.  Let for each $n\in \mathbb{N}$
$${{K}_{n}}(\mathbf{X})=\mathop{\prod }^{}{{S}_{n}}({{X}_{i}}),$$                                         
$${{K}_{n}}(\mathbf{{X}'})=\mathop{\prod }^{}{{S}_{n}}(X_{i}^{'}),$$                                          
$${{K}_{n}}(\mathbf{f})=\mathop{\prod }^{}{{C}_{n}}({{f}_{i}}),$$                                           
and ${{\partial }_{n}}:{{K}_{n}}(\mathbf{X})\to {{K}_{n-1}}(\mathbf{X}),$ ${{\partial' }_{n}}:{{K}_{n}}(\mathbf{{X}'})\to {{K}_{n-1}}(\mathbf{{X}'})$  and  ${\tilde {{\partial }}_{n}}:{{K}_{n}}(\mathbf{f})\to {{K}_{n-1}}(\mathbf{f})$  are induced by ${{\partial }_{n}}:{S_{n}}(X_i)\to {S_{n-1}}(X_i),$ ${{\partial' }_{n}}:{S_{n}}(X'_i)\to {S_{n-1}}(X'_i)$  and  ${\tilde{{\partial }}_{n}}:{C_{n}}(f_i)\to {C_{n-1}}(f_i)$. It is clear that ${{K}_{*}}( \mathbf{X} )=\{{{K}_{n}}( \mathbf{X} ),\partial  \}$,  ${{K}_{*}}( {\mathbf{{X}'}} )=\{{{K}_{n}}( {\mathbf{{X}'}} ),\partial'  \}$ and ${{K}_{*}}( \mathbf{f} )=\{{{K}_{n}}( \mathbf{f} ),\tilde{\partial } \}$ are chain complexes.  Consider the maps ${{\mathbf{p}}_{\#}}:{{K}_{*}}( \mathbf{X} )\to {{K}_{*}}( \mathbf{X} ),$  $\mathbf{p'}_{\#}^:{{K}_{*}}( {\mathbf{{X}'}} )\to {{K}_{*}}( {\mathbf{{X}'}} )$  and $( {{\mathbf{p}}_{\#}},\mathbf{p}_{\#}^{'} ):{{K}_{*}}( \mathbf{f} )\to {{K}_{*}}( \mathbf{f} ),$  defined by
$${{\mathbf{p}}_{\#}}( {{c}^{n}} )={{\mathbf{p}}_{\#}} \{c_{i}^{n} \}=\{ p_{i,i+1}^{\#}( c_{i+1}^{n} )-c_{i}^{n} \},~~~\forall ~{{c}^{n}}\in {{K}_{n}}( \mathbf{X} ).$$             
$$\mathbf{p'}_{\#}( {{c'}^{n}} )=\mathbf{p'}_{\#}\{ {c'}_{i}^{n} \}=\{ p_{i,i+1}^{'\#}( {c'}_{i+1}^{n} )-{c'}_{i}^{n} \},~~~\forall ~{{c'}^{n}}\in {{K}_{n}}( {\mathbf{{X}'}} ).$$             
	$$( {{\mathbf{p}}_{\#}},\mathbf{p'}_{\#} )( {{{\tilde{c}}}^{n}} )=( {{\mathbf{p}}_{\#}},\mathbf{p'}_{\#} )( \{ {{{\tilde{c}}}^{n}_i} \} )=( {{\mathbf{p}}_{\#}},\mathbf{p}_{\#}^{'} )( \{ c_{i}^{n-1},{c'}_{i}^{n} \} )=$$
$$( \{p_{i,i+1}^{\#}( c_{i+1}^{n-1} )-c_{i}^{n-1} , p_{i,i+1}^{'\#}( {c'}_{i+1}^{n} )-{c'}_{i}^{n} \} )  ~~~~\forall ~ {{{\tilde{c}}}^{n}} \in {{K}_{n}}( \mathbf{f} ).$$

\begin{lemma}
For each inverse sequence $\mathbf{f}= \{ {{f}_{i}},({{p}_{i,i+1}},{p'}_{i,i+1})~,\mathbb{N} \}$ of continuous maps of topological spaces ${{\mathbf{p}}_{\#}}:{{K}_{*}}( \mathbf{X} )\to {{K}_{*}}( \mathbf{X} ),$  $\mathbf{p'}_{\#}:{{K}_{*}}( {\mathbf{{X}'}} )\to {{K}_{*}}( {\mathbf{{X}'}} )$  and $( {{\mathbf{p}}_{\#}},\mathbf{p'}_{\#} ):{{K}_{*}}( \mathbf{f} )\to {{K}_{*}}( \mathbf{f} )$ are chain maps.
\end{lemma}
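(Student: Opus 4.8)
The plan is to verify the chain-map identity $\partial\circ(-)=(-)\circ\partial$ separately for each of the three maps. Since every boundary operator on $K_*(\mathbf{X})$, $K_*(\mathbf{X}')$ and $K_*(\mathbf{f})$ is defined coordinatewise on the product (that is, $\partial\{c_i\}=\{\partial c_i\}$), and since each of $\mathbf{p}_{\#}$, $\mathbf{p'}_{\#}$, $(\mathbf{p}_{\#},\mathbf{p'}_{\#})$ is assembled from the index-shift maps $p_{i,i+1}^{\#}$, $p_{i,i+1}^{'\#}$ together with the identity, it suffices to expand both composites on a general element and compare the two results factor by factor.

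First I would dispatch $\mathbf{p}_{\#}$ (and $\mathbf{p'}_{\#}$ identically). On $c^{n}=\{c_i^{n}\}$ the two composites give, in the $i$-th factor, $p_{i,i+1}^{\#}(\partial c_{i+1}^{n})-\partial c_i^{n}$ against $\partial p_{i,i+1}^{\#}(c_{i+1}^{n})-\partial c_i^{n}$; these agree because $p_{i,i+1}^{\#}$, being induced by a continuous map, is itself a chain map, so $\partial p_{i,i+1}^{\#}=p_{i,i+1}^{\#}\partial$. No further hypothesis is needed in this case.

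The substantive case is $(\mathbf{p}_{\#},\mathbf{p'}_{\#})$ on $K_*(\mathbf{f})$, because the cone differential $\tilde{\partial}(l,m)=(\partial l,\,-\partial m+f_i^{\#}(l))$ couples the two coordinates through $f_i^{\#}$. Writing a general element as $\tilde{c}^{\,n}=\{(c_i^{n-1},c_i^{'n})\}$, I would expand $(\mathbf{p}_{\#},\mathbf{p'}_{\#})\tilde{\partial}$ and $\tilde{\partial}(\mathbf{p}_{\#},\mathbf{p'}_{\#})$ coordinatewise. The first coordinate matches exactly as in the previous paragraph. The second coordinate is where the calculation must be done with care: after cancelling the terms $-p_{i,i+1}^{'\#}(\partial c_{i+1}^{'n})+\partial c_i^{'n}$ and $-f_i^{\#}(c_i^{n-1})$ that occur in both composites, the remaining discrepancy is $p_{i,i+1}^{'\#}\big(f_{i+1}^{\#}(c_{i+1}^{n-1})\big)$ on one side against $f_i^{\#}\big(p_{i,i+1}^{\#}(c_{i+1}^{n-1})\big)$ on the other.

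These residual terms coincide precisely because of the commutative square $f_i\cdot p_{i,i+1}=p'_{i,i+1}\cdot f_{i+1}$ recorded in the construction above: applying the singular chain functor to it yields exactly $f_i^{\#}\,p_{i,i+1}^{\#}=p_{i,i+1}^{'\#}\,f_{i+1}^{\#}$, so the two mixing terms are equal and $(\mathbf{p}_{\#},\mathbf{p'}_{\#})$ commutes with $\tilde{\partial}$. The only real obstacle is bookkeeping: keeping track of the sign in the cone differential and correctly isolating the single mixed term that must be killed by the commutativity square. Once that term is identified, functoriality of the singular chain functor — which is all that makes each $p^{\#}$ a chain map and the square commute — finishes the argument.
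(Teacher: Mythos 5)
Your proposal is correct and follows essentially the same route as the paper's own proof: coordinatewise expansion of both composites, using that each $p_{i,i+1}^{\#}$, $p_{i,i+1}^{'\#}$ is a chain map for the first two assertions, and then, for $(\mathbf{p}_{\#},\mathbf{p'}_{\#})$ on the cone complex, cancelling the common terms and killing the residual mixed term via $f_i^{\#}\,p_{i,i+1}^{\#}=p_{i,i+1}^{'\#}\,f_{i+1}^{\#}$, which is exactly the identity the paper invokes from the commutative square $f_i\cdot p_{i,i+1}=p'_{i,i+1}\cdot f_{i+1}$.
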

\begin{proof}
Let $c^n\in {K_n}\left({\mathbf X}\right)$, then 
\[{\partial }_n{{\mathbf p}}_{{\mathbf \#}}\left(c^n\right)={\partial }_n{{\mathbf p}}_{{\mathbf \#}}\left\{c^n_i\right\}={\partial }_n\left\{p^{\#}_{i,i+1}\left(c^n_{i+1}\right)-c^n_i\right\}=\] 
\[\left\{{\partial }_np^{\#}_{i,i+1}\left(c^n_{i+1}\right)-{\partial }_n(c^n_i)\right\}=\left\{{p^{\#}_{i,i+1}\partial }_n\left(c^n_{i+1}\right)-{\partial }_n(c^n_i)\right\}=\] 
\[{{\mathbf p}}_{{\mathbf \#}}\left\{{\partial }_n(c^n_i)\right\}={{\mathbf p}}_{{\mathbf \#}}{\partial }_n\left(c^n\right).\] 
So ${{\mathbf{p}}_{\#}}:{{K}_{*}}( \mathbf{X} )\to {{K}_{*}}( \mathbf{X} )$ is a chain map. The same way we can show that $\mathbf{p'}_{\#}:{{K}_{*}}( {\mathbf{{X}'}} )\to {{K}_{*}}( {\mathbf{{X}'}} )$ is chain map as well. Therefore, the third part of the lemma remains to be proved. Let $\tilde{c}^n\in {{K}}_{n}\left({\mathbf f}\right)$, then 
\[ \tilde{{\partial}}_n \left( {\mathbf p}_{\#},{\mathbf p'}_{\#} \right) \left(\tilde{c}^n\right)= \tilde{{\partial}}_n( {{\mathbf{p}}_{\#}},\mathbf{p'}_{\#} )( \{ c_{i}^{n-1},{c'}_{i}^{n} \} )=\]
\[\tilde{{\partial}}_n( \{p_{i,i+1}^{\#}( c_{i+1}^{n-1} )-c_{i}^{n-1} , {p'}_{i,i+1}^{\#}( {c'}_{i+1}^{n} )-{c'}_{i}^{n} \} )=\]
\[( \tilde{{\partial}}_n\{p_{i,i+1}^{\#}( c_{i+1}^{n-1} )-c_{i}^{n-1}, {p'}_{i,i+1}^{\#}( {c'}_{i+1}^{n} )-{c'}_{i}^{n} \} )= \]
\[ ( \{ {{\partial}}_n (p_{i,i+1}^{\#}( c_{i+1}^{n-1} )-c_{i}^{n-1}),-{{\partial}}_n ({p'}_{i,i+1}^{\#}( {c'}_{i+1}^{n} )-{c'}_{i}^{n}) )+f^{\#}_i(p_{i,i+1}^{\#}( {c}_{i+1}^{n-1} )-{c}_{i}^{n-1})\} ).\]
On the other hand,
\[ \left( {\mathbf p}_{\#},{\mathbf p'}_{\#} \right) \left( \tilde{{\partial}}_n (\tilde{c}^n) \right)= ( {{\mathbf{p}}_{\#}},\mathbf{p'}_{\#} )( \tilde{{\partial}}_n \{ c_{i}^{n-1},{c'}_{i}^{n} \} )=\]
\[ ( {{\mathbf{p}}_{\#}},\mathbf{p'}_{\#} )(  \{ {\partial}_n ({c}_{i}^{n-1}),-{\partial}_n({c'}_{i}^{n})+f^{\#}_i ({c}_{i}^{n-1}) \} )=\]
\[ ( \{ p_{i,i+1}^{\#}( {\partial}_{n-1}(c_{i+1}^{n-1})) -{\partial}_{n-1}(c_{i}^{n-1}), 
{p'}_{i,i+1}^{\#}(-{\partial}_{n}( {c'}_{i+1}^{n})+f^{\#}_{i+1}(c_{i+1}^{n-1}))+{\partial}_{n}( {c'}_{i}^{n})-f^{\#}_{i}(c_{i}^{n-1})\} )=\]
\[ ( \{ {\partial}_{n-1}(p_{i,i+1}^{\#}(c_{i+1}^{n-1})) -{\partial}_{n-1}(c_{i}^{n-1}), -{\partial}_{n}({p'}_{i,i+1}^{\#}( {c'}_{i+1}^{n}))+{p'}_{i,i+1}^{\#}(f^{\#}_{i+1}(c_{i+1}^{n-1}))+{\partial}_{n}( {c'}_{i}^{n})-f^{\#}_{i}(c_{i}^{n-})\} )=\]
\[ ( \{ {\partial}_{n-1}(p_{i,i+1}^{\#}(c_{i+1}^{n-1})) -{\partial}_{n-1}({c}_{i}^{n-1}), -{\partial}_{n}({p'}_{i,i+1}^{\#}( {c'}_{i+1}^{n}))+{\partial}_{n}( {c'}_{i}^{n})+f^{\#}_{i}(p_{i,i+1}^{\#}(c_{i+1}^{n-1}))-f^{\#}_{i}(c_{i}^{n-1})\} )=\]
\[ ( \{ {\partial}_{n-1}(p_{i,i+1}^{\#}(c_{i+1}^{n-1}) -c_{i}^{n-1}), -{\partial}_{n}({p'}_{i,i+1}^{\#}( {c'}_{i+1}^{n})- {c'}_{i}^{n})+f^{\#}_{i}(p_{i,i+1}^{\#}(c_{i+1}^{n-1})-c_{i}^{n-1})\} ).\]

\end{proof}

Let ${{C}_{*}}( {{\mathbf{p}}_{\#}} )$, ${{C}_{*}}( \mathbf{p'}_{\#})$ and ${{C}_{*}}( {{\mathbf{p}}_{\#}},\mathbf{p'}_{\#} )$ be the chain cones of chain maps ${{\mathbf{p}}_{\#}}$, $\mathbf{p'}_{\#}$ and $( {{\mathbf{p}}_{\#}},\mathbf{p'}_{\#} )$, respectively. Consider the maps
$$\sigma :{{C}_{*}}( \mathbf{p'}_{\#} )\to {{C}_{*}}( {{\mathbf{p}}_{\#}},\mathbf{p'}_{\#}),$$                                    
$$\partial :{{C}_{*}}( {{\mathbf{p}}_{\#}},\mathbf{p'}_{\#} )\to {{C}_{*}}( {{\mathbf{p}}_{\#}} ) ,$$                                     
where for each $n\in N$,  ${{\sigma }}:{{C}_{n}}( \mathbf{p'}_{\#})\to {{C}_{n}}( {{\mathbf{p}}_{\#}},\mathbf{p'}_{\#})$ and ${{\partial }}:{{C}_{n}}( {{\mathbf{p}}_{\#}},\mathbf{p'}_{\#})\to {{C}_{n-1}}( {{\mathbf{p}}_{\#}} )$ are defined by
	$${{\sigma }}( {{c'}^{n-1}},{{c'}^{n}} )={{\sigma }_{n}}( \{{c'}_{i}^{n-1}\}~,\{ {c'}_{i}^{n} \} )=$$
$$=( \{ 0, {c'}_{i}^{n-1} \},\{ 0,{c'}_{i}^{n}\})~~~\forall ( {{c'}^{n-1}},{{c'}^{n}} )\in {{C}_{n}}( \mathbf{p'}_{\#}),~~~$$

	$${{\partial }}( {{{\tilde{c}}}^{n-1}},{{{\tilde{c}}}^{n}} )={{\partial }_{n}}( \{c_{i}^{n-2},{c'}_{i}^{n-1}\}~,\{c_{i}^{n-1},{c'}_{i}^{n}\} )=$$
		$$=( \{c_{i}^{n-2} \}~, \{ c_{i}^{n-1} \} )~~~\forall ( {{{\tilde{c}}}^{n-1}},{{{\tilde{c}}}^{n}} )\in {{C}_{n}}( {{\mathbf{p}}_{\#}},\mathbf{p'}_{\#}).$$

\begin{lemma} For each inverse sequence $\mathbf{f}= \{ {{f}_{i}},({{p}_{i,i+1}},{p'}_{i,i+1})~, \mathbb{N} \}$  of continuous maps of topological spaces the following short sequence
	$$0\to {{C}_{*}}( \mathbf{p'}_{\#} )\overset{\sigma }{\mathop{\to }}\,{{C}_{*}}( {{\mathbf{p}}_{\#}},\mathbf{p'}_{\#} )\overset{\partial }{\mathop{\to }}\,{{C}_{*}}( {{\mathbf{p}}_{\#}} )\to 0$$
is exact.
\end{lemma}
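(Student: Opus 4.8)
The plan is to prove exactness degreewise, by unwinding the two nested cone constructions and exhibiting the sequence in each dimension as the canonical split short exact sequence of a direct sum. The key identification is the one already built into the definition of the chain cone, namely $C_n(f^{\#}_i)\approx S_{n-1}(X_i)\oplus S_n(X_i')$; taking products over $i$ and commuting the (two-term) direct sum past the product gives
\[
K_n(\mathbf f)=\prod_i C_n(f^{\#}_i)\approx \Big(\prod_i S_{n-1}(X_i)\Big)\oplus\Big(\prod_i S_n(X_i')\Big)=K_{n-1}(\mathbf X)\oplus K_n(\mathbf X').
\]
Everything in the lemma follows from this together with careful bookkeeping of the degree shifts introduced by each cone.

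Applying the cone construction to the three chain maps $\mathbf p_{\#}$, $\mathbf p'_{\#}$ and $(\mathbf p_{\#},\mathbf p'_{\#})$, I would write the three groups in dimension $n$ as
\[
C_n(\mathbf p'_{\#})\approx K_{n-1}(\mathbf X')\oplus K_n(\mathbf X'),\qquad C_{n-1}(\mathbf p_{\#})\approx K_{n-2}(\mathbf X)\oplus K_{n-1}(\mathbf X),
\]
\[
C_n(\mathbf p_{\#},\mathbf p'_{\#})\approx K_{n-1}(\mathbf f)\oplus K_n(\mathbf f)\approx\big(K_{n-2}(\mathbf X)\oplus K_{n-1}(\mathbf X')\big)\oplus\big(K_{n-1}(\mathbf X)\oplus K_n(\mathbf X')\big).
\]
The only genuine subtlety is that $\partial$ lowers degree by one, so in a fixed dimension $n$ the sequence reads $0\to C_n(\mathbf p'_{\#})\to C_n(\mathbf p_{\#},\mathbf p'_{\#})\to C_{n-1}(\mathbf p_{\#})\to 0$. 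Reordering the four summands of $C_n(\mathbf p_{\#},\mathbf p'_{\#})$ as $\big(K_{n-1}(\mathbf X')\oplus K_n(\mathbf X')\big)\oplus\big(K_{n-2}(\mathbf X)\oplus K_{n-1}(\mathbf X)\big)$, I would recognize it as $C_n(\mathbf p'_{\#})\oplus C_{n-1}(\mathbf p_{\#})$; under this identification the defining formulas say precisely that $\sigma$ is the inclusion of the first summand (it inserts the $c'$-data into the $\mathbf X'$-slots and $0$ into the $\mathbf X$-slots) while $\partial$ is the projection onto the second summand (it reads off the $\mathbf X$-slots and discards the $\mathbf X'$-slots).

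With the decomposition in hand the three exactness checks are immediate and I would carry them out directly from the displayed formulas. Injectivity of $\sigma$: if $\sigma(c'^{n-1},c'^{n})=(\{0,c'^{n-1}\},\{0,c'^{n}\})=0$, then $c'^{n-1}$ and $c'^{n}$ vanish. Surjectivity of $\partial$: an arbitrary $(c^{n-2},c^{n-1})\in C_{n-1}(\mathbf p_{\#})$ is the image of the lift $(\{c^{n-2},0\},\{c^{n-1},0\})$. Finally $\operatorname{im}\sigma=\ker\partial$: the inclusion $\partial\sigma=0$ is one line, since $\partial(\{0,c'^{n-1}\},\{0,c'^{n}\})=(\{0\},\{0\})$; conversely, if $\partial(\{c^{n-2},c'^{n-1}\},\{c^{n-1},c'^{n}\})=(\{c^{n-2}\},\{c^{n-1}\})=0$ then $c^{n-2}=c^{n-1}=0$, so the element equals $\sigma(c'^{n-1},c'^{n})$.

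For the sequence to be a short exact sequence of chain complexes — which is what is needed to extract the associated long exact (Milnor-type) homology sequence — one should additionally confirm that $\sigma$ and $\partial$ are compatible with the differentials (allowing for the degree shift in $\partial$). This is a routine verification from the cone differential $\partial(l,m)=(\partial l,-\partial m+f_{\#}(l))$ together with the description of $\tilde\partial$ on $K_*(\mathbf f)$ established in the preceding lemma. I do not expect any real obstacle: the whole content of the statement is the split direct-sum decomposition above, and the only place demanding care is the consistent tracking of the degree shifts produced by the three successive cone constructions.
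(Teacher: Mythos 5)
Your proposal is correct and follows essentially the same route as the paper: the checks you carry out (injectivity of $\sigma$, surjectivity of $\partial$ via the lift $(\{c^{n-2},0\},\{c^{n-1},0\})$, the one-line computation $\partial\sigma=0$, and $\ker\partial\subset \operatorname{im}\sigma$) are exactly the paper's proof of the lemma. Your split direct-sum identification $C_n(\mathbf{p}_{\#},\mathbf{p'}_{\#})\approx C_n(\mathbf{p'}_{\#})\oplus C_{n-1}(\mathbf{p}_{\#})$ is just a tidy repackaging of that same degreewise computation, and your closing remark that $\sigma$ and $\partial$ should also be verified to be compatible with the differentials flags a point the paper leaves implicit rather than a defect in your argument.
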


\begin{proof}
Let $( {{{c'}_1}^{n-1}},{{{c'}_1}^{n}} ), ( {{{c'}_2}^{n-1}},{{{c'}_2}^{n}} )\in {{C}_{n}}( \mathbf{p'}_{\#})$ be such elements that
$${{\sigma }}( {{c'_1}^{n-1}},{{c'_1}^{n}} )=
{{\sigma }}( {{c'_2}^{n-1}},{{c'_2}^{n}} ).$$
So we have
$$( \{ 0, {c'}_{1,i}^{n-1} \},\{ 0,{c'}_{1,i}^{n}\})=( \{ 0, {c'}_{2,i}^{n-1} \},\{ 0,{c'}_{2,i}^{n}\}).$$
The last equation means that for each $n,i \in \mathbb{N}$ we have ${c'}_{1,i}^{n-1}={c'}_{2,i}^{n-1}$ and ${c'}_{1,i}^{n}={c'}_{2,i}^{n}$. Therefore 
$( {{c_1}^{n-1}},{{c_1}^{n}} )=( {{c_2}^{n-1}},{{c_2}^{n}}).$ So $\sigma$ is a monomorphism.

Let $( {{{c}}^{n-2}},{{{c}}^{n-1}} )=( \{{c}_{i}^{n-2}\}~,\{{c}_{i}^{n-1}\} ) \in {{C}_{n-1}}( \mathbf{p}_{\#})$ be any element. consider $( {{{\tilde{c}}}^{n-1}},{{{\tilde{c}}}^{n}} )=( \{ c_{i}^{n-2},0 \}~,\{c_{i}^{n-1},0 \} ) \in {{C}_{n}}( {{\mathbf{p}}_{\#}},\mathbf{p'}_{\#}).$ In this case we have 
   	$${{\partial }}( {{{\tilde{c}}}^{n-1}},{{{\tilde{c}}}^{n}} )={{\partial }_{n}}( \{c_{i}^{n-2},0\}~,\{c_{i}^{n-1},0 \} )=( \{c_{i}^{n-2} \}~, \{ c_{i}^{n-1} \} )=( {{{c}}^{n-2}},{{{c}}^{n-1}} ).$$
So $\partial$ is an epimorphism.

Now lets show that $\partial \cdot \sigma = 0$. Indeed
$$\partial\left( {{\sigma }} \left( {{c'}^{n-1}},{{c'}^{n}} \right) \right)=\partial \left({{\sigma }} \left( \{{c'}_{i}^{n-1}\}~,\{ {c'}_{i}^{n} \} \right) \right)=$$
$$\partial \left( \{ 0, {c'}_{i}^{n-1} \},\{ 0,{c'}_{i}^{n}\} \right)$$   
$$= \left( \{ 0 \}~, \{ 0 \} \right)=\left(  0 ~,  0  \right)=0.$$

To the end of the proof it remains to show that $Ker(\partial) \subset Im(\sigma)$. Consider any element $( {{{\tilde{c}}}^{n-1}},{{{\tilde{c}}}^{n}} )\in Ker(\partial)$. So we have 
$${{\partial }}( {{{\tilde{c}}}^{n-1}},{{{\tilde{c}}}^{n}} )={{\partial }_{n}}( \{c_{i}^{n-2},{c'}_{i}^{n-1}\}~,\{c_{i}^{n-1},{c'}_{i}^{n}\} )=$$
		$$=( \{c_{i}^{n-2} \}~, \{ c_{i}^{n-1} \} )=0.$$
So for each $n,i \in N$ we have $c_{i}^{n-2}=0$ and $c_{i}^{n-1}=0$. Consider the element $({c'}^{n-1},{c'}^n)=( \{{c'}_{i}^{n-1}\}~,\{{c'}_{i}^{n}\} )$. In this case we have 

$${{\sigma }}( {{c'}^{n-1}},{{c'}^{n}} )={{\sigma }}( \{{c'}_{i}^{n-1}\}~,\{ {c'}_{i}^{n} \} )=$$
$$=( \{ 0, {c'}_{i}^{n-1} \},\{ 0,{c'}_{i}^{n}\})=( {{{\tilde{c}}}^{n-1}},{{{\tilde{c}}}^{n}} ).$$
\end{proof}
 By Lemma 2.2 we obtain the following long exact sequence  
$$\cdots \to {{\text{H}}_{\text{n}+1}}( {{C}_{*}}( \mathbf{p'}_{\#} ) )\overset{{{\sigma }_{*}}}{\mathop{\to }}\,{{\text{H}}_{\text{n}+1}}( {{C}_{*}}( {{\mathbf{p}}_{\#}},\mathbf{p'}_{\#} ) )\overset{{{\partial }_{*}}}{\mathop{\to }}\,{{\text{H}}_{\text{n}}}( {{C}_{*}}( {{\mathbf{p}}_{\#}} ) )\overset{E}{\mathop{\to }}\,{{\text{H}}_{\text{n}}}( {{C}_{*}}( {{\mathbf{q}}_{\#}} ) )\to \cdots .~~~~$$ 

By Theorem 3.1 of \cite{1901}, if we consider the strong  ANR-expansion $\mathbf{X}=\{X_i,p_{i,i+1}, \mathbb{N} \}$ of compact metric space $X\in \mathbf{CM}$, then $( n+1 )$-dimensional homology group   ${{H}_{n+1}}( {{C}_{*}}( {{\mathbf{p}}_{\#}} ) )$ of the chain cone ${{C}_{*}}( {{\mathbf{p}}_{\#}} )$ of the chain map ${{\mathbf{p}}_{\#}}:{{K}_{*}}( \mathbf{X} )\to {{K}_{*}}( \mathbf{X} )$  is isomorphic to the strong homology group ${{\mathbf{\bar{H}}}_{\mathbf{n}}}( X )$ (Steenrod homology) of  $X$. On the other hand, by Corollary 2.2 of \cite{1902}, if $\mathbf{f}=\{ f_i,( \varphi_{i,i+1},{\varphi '}_{i,i+1} ), \mathbb{N} \}$ is strong ANR-expansion of $ f:X \to X'$, then $( n+1 )$-dimensional homology group   ${{H}_{n+1}}( {{C}_{*}}( {{\mathbf{p}}_{\#}},{{\mathbf{p'}}_{\#}} ) )$ of the chain cone ${{C}_{*}}( {{\mathbf{p}}_{\#}},{{\mathbf{p'}}_{\#}} )$ of the chain map $({{\mathbf{p}}_{\#}},{{\mathbf{p'}}_{\#}}):{{K}_{*}}( \mathbf{f} )\to {{K}_{*}}( \mathbf{f} )$  is isomorphic to the strong homology group ${{\mathbf{\bar{H}}}_{\mathbf{n}}}( f )$ (Steenrod homology) of  $f$. That is why, we will denote   the $( n+1 )$-dimensional homology group of complexes ${{C}_{*}}( {{\mathbf{p}}_{\#}} ),$ ${{C}_{*}}( \mathbf{p'}_{\#})$ and $ {{C}_{*}}( {{\mathbf{p}}_{\#}},\mathbf{p'}_{\#})$ by the symbols  ${{\mathbf{{H}}}_{\mathbf{n}}}( \mathbf {X} ),$ ${{\mathbf{{H}}}_{\mathbf{n}}}( {{ \mathbf{X}}'} )$ and ${{\mathbf{{H}}}_{\mathbf{n}}}( \mathbf{f} )$, respectively. Lets denote  the following long exact  sequence
$$
\cdots \to {{\mathbf{{H}}}_{\mathbf{n}}}( { \mathbf{X'}} )\overset{{{\sigma }_{*}}}{\mathop{\to }}\,{{\mathbf{{H}}}_{\mathbf{n}}}( \mathbf{f} )\overset{{{\partial }_{*}}}{\mathop{\to }}\,{{\mathbf{{H}}}_{\mathbf{n}-1}}( \mathbf{X} )\overset{E}{\mathop{\to }}\,{{\mathbf{{H}}}_{\mathbf{n}-1}}( { \mathbf{X'}} )\to \cdots
$$
by  ${\mathbf{{H}}}( \mathbf{f} )$, which is  an object of the category $\mathbf{Ab}$. Our aim is to show that $\mathbf{{H}}$ is a functor from the category $\mathbf{CH}( \mathbf{tow}-\mathbf{Mo}{{\mathbf{r}}_{\mathbf{ANR}}} )$ to the category $\mathbf{Ab}$.

Consider any morphism $\left[ ( \mathbf{\varphi },\mathbf{{\varphi }'} ) \right]:\mathbf{f}\to \mathbf{g}$ of the category $\mathbf{CH}( \mathbf{tow}-\mathbf{Mo}{{\mathbf{r}}_{\mathbf{ANR}}} ).$ Let $( \mathbf{\varphi },\mathbf{{\varphi }'} )$ be any representative of the coherent homotopy class $\left[ ( \mathbf{\varphi },\mathbf{{\varphi }'} ) \right]$. Therefore, $( \mathbf{\varphi },\mathbf{{\varphi }'} )$ is a system $\{ ( {{\varphi }_{m}},{\varphi'} _{m}),( {{\varphi }_{m,m+1}},{\varphi '} _{m,m+1} ),\varphi  \}~:\mathbf{f}\to \mathbf{g}$, where $\varphi ~: \mathbb{N} \to \mathbb{N}$ is an increasing function, $( {{\varphi }_{m}},{\varphi'} _{m} ):{{f}_{\varphi ( m )}}\to {{g}_{m}}$ and $( {{\varphi }_{m,m+1}},{\varphi'} _{m,m+1} ):{{f}_{\varphi ( m+1 )}}\times {{1}_{I}}\to {{g}_{m}}$ are  morphisms such that 
$$( {{\varphi }_{m,m+1}}( x,0 ),{{\varphi}'}_{m,m+1}( {x}',0 ) )=( {{\varphi }_{m}}( {{p}_{\varphi ( m ),\varphi ( m+1 )}}( x ) ),{{\varphi}'} _{m}( {p'}_{\varphi ( m ),\varphi ( m+1 )}( {{x}'} ) ) ),$$ 
$$( {{\varphi }_{m,m+1}}( x,1 ),{{\varphi}'} _{m,m+1}( {x}',1 ) )=( {{q}_{m,m+1}}( {{\varphi }_{m+1}}( x ) ),{q'}_{m,m+1}( {{\varphi}'} _{m+1}( {{x}'} ) ) ).$$            
Consider the corresponding coherent mappings $\mathbf {\varphi }=\{ {{\varphi }_{m}},{{\varphi }_{m,m+1}},\varphi  \}~:\mathbf{X}\to \mathbf{Y}$ and $\mathbf{{\varphi }'}= \{ \varphi _{m}^{'},\varphi _{m,m+1}^{'},\varphi  \}~:\mathbf{{X}'}\to \mathbf{{Y}' }$ of inverse sequences of topological spaces. By the Lemma 2.2 of  \cite{1901}  given mappings induce  a coherent chain maps ${{\mathbf{\varphi }}_{\#}}= \{ ( \mathbf{\varphi }_{\#}^{0},\mathbf{\varphi }_{\#}^{0} ),\mathbf{\varphi }_{\#}^{1} \}~:{{\mathbf{p}}_{\#}}\to {{\mathbf{q}}_{\#}}$ and ${{\mathbf{\varphi' }}_{\#}}= \{ ( \mathbf{\varphi' }_{\#}^{0},\mathbf{\varphi' }_{\#}^{0} ),\mathbf{\varphi' }_{\#}^{1} \}~:{{\mathbf{p'}}_{\#}}\to {{\mathbf{q'}}_{\#}}$, which themselves  induce the chain maps  ${{\mathbf{\varphi }}_{\#}}:{{C}_{*}}( {{\mathbf{p}}_{\#}} )\to {{C}_{*}}( {{\mathbf{q}}_{\#}} )$ and ${{\mathbf{\varphi' }}_{\#}}:{{C}_{*}}( {{\mathbf{p'}}_{\#}} )\to {{C}_{*}}( {{\mathbf{q'}}_{\#}} )$. So, for each coherent morphism $( \mathbf{\varphi },\mathbf{{\varphi' }} )= \{ ( {{\varphi }_{m}},{{\varphi'}} _{m} ),( {{\varphi }_{m,m+1}},{{\varphi}'} _{m,m+1} ),\varphi  \}~:\mathbf{f}\to \mathbf{g}$ we have the chain maps
$${{\mathbf{\varphi }}_{\#}}:{{C}_{*}}( {{\mathbf{p}}_{\#}} )\to {{C}_{*}}( {{\mathbf{q}}_{\#}} ) ,$$                              
$$\mathbf{\varphi' }_{\#}:{{C}_{*}}( \mathbf{p'}_{\#} )\to {{C}_{*}}( \mathbf{q'}_{\#} ) .$$                              
Let $( {{\mathbf{\varphi }}_{\#}},\mathbf{{\varphi}'}_{\#} ):{{C}_{*}}( {{\mathbf{p}}_{\#}},\mathbf{p'}_{\#})\to {{C}_{*}}( {{\mathbf{q}}_{\#}},\mathbf{q'}_{\#})$ be the chain map defined by
	$$( {{\mathbf{\varphi }}_{\#}},\mathbf{{\varphi}'}_{\#})( {{{\tilde{c}}}^{n-1}},{{{\tilde{c}}}^{n}} )=( {{\mathbf{\varphi }}_{\#}},\mathbf{{\varphi}'}_{\#} )( { \tilde{c}_{i}^{n-1} },{ \tilde{c}_{i}^{n} } )=$$
	$$( {{\mathbf{\varphi }}_{\#}},\mathbf{{\varphi}'}_{\#} )( \{c_{i}^{n-2},{c'}_{i}^{n-1}\}~,\{c_{i}^{n-1},{c'}_{i}^{n}\} )=$$
$$( {{\mathbf{\varphi }}_{\#}}(\{c_{i}^{n-2}, c_{i}^{n-1} \}),\mathbf{{\varphi}'}_{\#}(\{{c'}_{i}^{n-1},{c'}_{i}^{n} \}~) ) .$$                     
In this case the following diagram is commutative
$$
\begin{matrix}
   0\to {{C}_{*}}\left( \mathbf{p'}_{\#} \right)\overset{\sigma }{\mathop{\to }}\,{{C}_{*}}\left( {{\mathbf{p}}_{\#}},\mathbf{p'}_{\#} \right)\overset{\partial }{\mathop{\to }}\,{{C}_{*}}\left( {{\mathbf{p}}_{\#}} \right)\to 0  \\
   ~~~~\downarrow \mathbf{{\varphi}' }_{\#~}~~~~~~\downarrow \left( {{\mathbf{\varphi }}_{\#}},\mathbf{{\varphi}' }_{\#} \right)~~~\downarrow {{\mathbf{\varphi }}_{\#}}  \\
   0\to {{C}_{*}}\left( \mathbf{q'}_{\#} \right)\overset{\sigma }{\mathop{\to }}\,{{C}_{*}}\left( {{\mathbf{q}}_{\#}},\mathbf{q'}_{\#} \right)\overset{\partial }{\mathop{\to }}\,{{C}_{*}}\left( {{\mathbf{q}}_{\#}} \right)\to 0.  \\
\end{matrix}
$$
On the other hand, the obtained diagram induces morphism between the long exact sequences,which is denoted by ${{( \mathbf{\varphi },\mathbf{{\varphi }'} )}_{*}}:\mathbf{{H}}( \mathbf{f} ) \to  \mathbf{{H}}( \mathbf{g} )$. On the other hand, by Corollary 2.4 \cite{1901} and the Lemma of five homomorphisms any two representative $( {{\mathbf{\varphi }}_{1}},\mathbf{\varphi' }_{1} )$ and $( {{\mathbf{\varphi }}_{2}},\mathbf{\varphi' }_{2})$ of a coherent homotopy class $\left[ ( \mathbf{\varphi },\mathbf{{\varphi }'} ) \right]:\mathbf{f}\to \mathbf{g}$ of the category $\mathbf{CH}( \mathbf{tow}-\mathbf{Mo}{{\mathbf{r}}_{\mathbf{ANR}}} )$ induce the same morphism
$${{( {{\mathbf{\varphi }}_{1}},\mathbf{{\varphi}' }_{1} )}_{*}}={{( {{\mathbf{\varphi }}_{2}},\mathbf{{\varphi}' }_{2} )}_{*}}:\mathbf{{H}}( \mathbf{f} )\to \mathbf{{H}}( \mathbf{g} ).$$                 
Therefore, we can say that any coherent homotopy class $\left[ ( \mathbf{\varphi },\mathbf{{\varphi }'} ) \right]$ induces  the morphism  ${{\left[ ( \mathbf{\varphi },\mathbf{{\varphi }'} ) \right]}_{*}}:\mathbf{\bar{H}}( \mathbf{f} )\to \mathbf{\bar{H}}{{( \mathbf{g} )}}$, which can be defined by
$${{\left[ ( \mathbf{\varphi },\mathbf{{\varphi }'} ) \right]}_{*}}={{( {{\mathbf{\varphi }}},\mathbf{{\varphi}' })}_{*}},~~~\forall ~~( {{\mathbf{\varphi }}},\mathbf{{\varphi}' } )\in \left[ ( \mathbf{\varphi },\mathbf{{\varphi }'} ) \right].$$            
So, we define the homological functor
$$\mathbf{{H}}:\mathbf{CH}( \mathbf{tow}-\mathbf{Mo}{{\mathbf{r}}_{\mathbf{ANR}}} )\to \mathbf{Ab}.$$                     
Using the constructed functor, we can define the so called strong homology functor $\mathbf{{H}}:\mathbf{SSH}( \mathbf{Mor}_{\mathbf{CM}} )\to \mathbf{Ab}$ in the following way: For each morphism $f\in \mathbf{Mo}{{\mathbf{r}}_{\mathbf{CM}}}$ consider the corresponding strong expansion $( \mathbf{p},\mathbf{{p}'} ):f\to \mathbf{f}$. Let denote the homology sequence $\mathbf{{H}}( \mathbf{f} )$ by $\mathbf{\bar{H}}( f )$ and call it strong homology sequence of $f$.  It is clear that $\mathbf{\bar{H}}( f )$ does not defend on the choice of the expansion $( \mathbf{p},\mathbf{{p}'} )$. In the same way, for each strong shape morphism $F:f\to g$, consider the corresponding triple $( ( \mathbf{p},\mathbf{p'} ),( \mathbf{q},\mathbf{q'} ),\left[ ( \mathbf{\psi },\mathbf{{\psi }'} ) \right] )$, where $( \mathbf{p},\mathbf{{p}'} ):f\to \mathbf{f}$ and $( \mathbf{q},\mathbf{{q}'} ):g\to \mathbf{g}$ are strong fiber expansions and $\left[ ( \mathbf{\psi },\mathbf{{\psi }'} ) \right]\mathbf{f}\to \mathbf{g}$ is a coherent homotopy class of the coherent morphisms. Let denote corresponding induced morphism  ${{\left[ ( \mathbf{\varphi },\mathbf{{\varphi }'} ) \right]}_{*}}:\mathbf{{H}}( \mathbf{f} )\to \mathbf{{H}}{{( \mathbf{g} )}_{*}}$ by ${{F}_{*}}$. It is called induced morphisms by strong shape morphism $F:f\to g$. Therefore, we obtain the so called strong homological functor:
$$\mathbf{{H}}:\mathbf{SSH}( \mathbf{Mo}{{\mathbf{r}}_{\mathbf{CM}}} )\to \mathbf{Ab}. $$                      
By using the obtained functor, we can define the homology functor
$$\mathbf{\bar{H}}:{\mathbf {Mor}}_{\mathbf{CM}}\to \mathbf{Ab}$$                             
as a composition $\mathbf{\bar{H}}=\mathbf{{H}}\cdot \mathbf{S}$, where 
$$\mathbf{S}:~{\mathbf{Mor}}_{\mathbf{CM}}\to \mathbf{SSH}( \mathbf{Mo}{{\mathbf{r}}_{\mathbf{CM}}} )$$                       
is the strong shape functor. Note that for the homology functor  $\mathbf{\bar{H}}: \mathbf{Mor}_{CM}\to \mathbf{Ab}$ we have the following:

\begin{corollary} 
For each continuous map $f\in {\mathbf{Mor}}_{\mathbf{CM}}$ of compact metric spaces the corresponding homological sequence
	$$\cdots \to {{\mathbf{\bar{H}}}_{\mathbf{n}}}( {{X}'} )\overset{{{\sigma }_{*}}}{\mathop{\to }}\,{{\mathbf{\bar{H}}}_{\mathbf{n}}}( f )\overset{{{\partial }_{*}}}{\mathop{\to }}\,{{\mathbf{\bar{H}}}_{\mathbf{n}-1}}( X )\overset{E}{\mathop{\to }}\,{{\mathbf{\bar{H}}}_{\mathbf{n}-1}}( {{X}'} )\to \cdots $$
is exact.

\end{corollary}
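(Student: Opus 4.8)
The plan is to reduce the statement to the long exact homology sequence that was already extracted from Lemma 2.2, read through the identifications defining the strong homology groups. First I would fix a strong $\mathrm{ANR}$-expansion $(\mathbf{p},\mathbf{p'}):f\to\mathbf{f}$ of the continuous map $f:X\to X'$, with $\mathbf{f}=\{f_i,(p_{i,i+1},p'_{i,i+1}),\mathbb{N}\}$ an object of $\mathbf{CH}(\mathbf{tow}-\mathbf{Mor}_{\mathbf{ANR}})$. By Lemma 2.2 the short sequence
$$0\to C_*(\mathbf{p'}_\#)\overset{\sigma}{\to}C_*(\mathbf{p}_\#,\mathbf{p'}_\#)\overset{\partial}{\to}C_*(\mathbf{p}_\#)\to 0$$
of chain complexes is exact, and applying the usual zig-zag (snake) lemma of homological algebra to it yields the long exact homology sequence whose terms are $H_{n+1}(C_*(\mathbf{p'}_\#))$, $H_{n+1}(C_*(\mathbf{p}_\#,\mathbf{p'}_\#))$ and $H_{n}(C_*(\mathbf{p}_\#))$, with the connecting homomorphism $E$; this is precisely the sequence denoted $\mathbf{H}(\mathbf{f})$ in the paragraph preceding the corollary.

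Second, I would translate this sequence into strong homology by substituting the three identification isomorphisms term by term. By Theorem 3.1 of \cite{1901} one has $\mathbf{H}_n(\mathbf{X})=H_{n+1}(C_*(\mathbf{p}_\#))\cong\bar{H}_n(X)$, and likewise $\mathbf{H}_n(\mathbf{X'})\cong\bar{H}_n(X')$; by Corollary 2.2 of \cite{1902} one has $\mathbf{H}_n(\mathbf{f})=H_{n+1}(C_*(\mathbf{p}_\#,\mathbf{p'}_\#))\cong\bar{H}_n(f)$. Since $\bar{H}(f)$ was defined precisely as $\mathbf{H}(\mathbf{f})$ under these identifications, inserting the isomorphisms into $\mathbf{H}(\mathbf{f})$ gives the asserted sequence
$$\cdots\to\bar{H}_n(X')\overset{\sigma_*}{\to}\bar{H}_n(f)\overset{\partial_*}{\to}\bar{H}_{n-1}(X)\overset{E}{\to}\bar{H}_{n-1}(X')\to\cdots,$$
and replacing each group by an isomorphic copy compatible with the maps preserves exactness. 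Independence of the chosen expansion, recorded in the text just before the corollary, then guarantees that the sequence is genuinely an invariant of $f$ itself.

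The argument is essentially formal bookkeeping: once Lemma 2.2 and the two cited computations are granted, exactness is automatic, and the genuine mathematical content resides in those earlier results rather than in the corollary. The one point I would treat with care is verifying that the connecting homomorphism $E$ produced by the zig-zag lemma agrees, under the identifications of Theorem 3.1 of \cite{1901} and Corollary 2.2 of \cite{1902}, with the map labelled $E$ in the claimed sequence — in other words, that those identifications are natural with respect to the short exact sequence of chain cones. I expect this compatibility check to be the main, though routine, obstacle.
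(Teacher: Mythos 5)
Your proposal is correct and takes essentially the same route as the paper: the corollary is stated there without a separate proof precisely because $\mathbf{\bar{H}}(f)$ is by definition the long exact sequence $\mathbf{H}(\mathbf{f})$ arising from the short exact sequence of chain cones in Lemma 2.2, with its terms identified with the strong homology groups via Theorem 3.1 of \cite{1901} and Corollary 2.2 of \cite{1902}, and with independence of the chosen expansion noted in the preceding text. The compatibility of the connecting homomorphism $E$ with those identifications, which you rightly flag as the one point needing care, is likewise left implicit in the paper.
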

\begin{corollary} 
If any two morphisms  $( {{\varphi }_{1}},{\varphi'} _{1} ),( {{\varphi }_{2}},{\varphi'} _{2} ):f\to g$ induce the same strong shape morphisms, then
$${{( {{\varphi }_{1}},{\varphi'} _{1} )}_{*}}={{( {{\varphi }_{2}},{\varphi'} _{2} )}_{*}}:\mathbf{H}( f )\to \mathbf{H}( g ).$$ \qed               
\end{corollary}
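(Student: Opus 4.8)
The plan is to deduce the statement immediately from the factorization of the homology functor through strong shape that has just been constructed, so that it reduces to a formal functoriality argument. Recall that the induced homomorphism on homology was set up precisely so that $\mathbf{\bar{H}} = \mathbf{H}\cdot\mathbf{S}$, where $\mathbf{S}:\mathbf{Mor}_{\mathbf{CM}}\to\mathbf{SSH}(\mathbf{Mor}_{\mathbf{CM}})$ is the strong shape functor and $\mathbf{H}:\mathbf{SSH}(\mathbf{Mor}_{\mathbf{CM}})\to\mathbf{Ab}$ is the functor built above. Hence for a morphism $(\varphi,\varphi'):f\to g$ of $\mathbf{Mor}_{\mathbf{CM}}$ the induced map is, by definition, $(\varphi,\varphi')_* = \mathbf{H}(\mathbf{S}((\varphi,\varphi')))$: one first passes to the strong shape morphism $\mathbf{S}((\varphi,\varphi'))$ and then applies $\mathbf{H}$.

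First I would restate the hypothesis in these terms. To say that $(\varphi_1,\varphi_1')$ and $(\varphi_2,\varphi_2')$ induce the same strong shape morphism is exactly the equality $\mathbf{S}((\varphi_1,\varphi_1')) = \mathbf{S}((\varphi_2,\varphi_2'))$ in $\mathbf{SSH}(\mathbf{Mor}_{\mathbf{CM}})$. Because $\mathbf{H}$ attaches to one strong shape morphism one homomorphism of the associated long exact sequences, applying $\mathbf{H}$ to both sides of this equality gives
$$(\varphi_1,\varphi_1')_* = \mathbf{H}(\mathbf{S}((\varphi_1,\varphi_1'))) = \mathbf{H}(\mathbf{S}((\varphi_2,\varphi_2'))) = (\varphi_2,\varphi_2')_*,$$
which is precisely the claimed equality of maps $\mathbf{H}(f)\to\mathbf{H}(g)$.

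The step I would flag as carrying the only real content — and it is one already discharged in the construction above — is the well-definedness of $\mathbf{H}$ on $\mathbf{SSH}(\mathbf{Mor}_{\mathbf{CM}})$, i.e. that $(\varphi,\varphi')_*$ depends only on $\mathbf{S}((\varphi,\varphi'))$ and not on the auxiliary choices used to compute it. Evaluating $\mathbf{H}$ on a strong shape morphism requires choosing strong expansions $(\mathbf{p},\mathbf{p}'):f\to\mathbf{f}$, $(\mathbf{q},\mathbf{q}'):g\to\mathbf{g}$ and a coherent homotopy class $[(\mathbf{\varphi},\mathbf{\varphi}')]:\mathbf{f}\to\mathbf{g}$ representing the morphism, and then taking the map on long exact sequences induced by the commutative diagram of chain cones. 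By Corollary 2.4 of \cite{1901} together with the five lemma, any two representatives of a fixed coherent homotopy class induce the same map, so the assignment descends to coherent homotopy classes and hence to strong shape morphisms. Once this is invoked, no obstacle remains and the corollary follows formally; the genuine effort sits in the construction of $\mathbf{H}$, not in this deduction.
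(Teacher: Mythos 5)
Your proposal is correct and is precisely the argument the paper intends: the corollary is stated with no written proof because it follows formally from the factorization $\mathbf{\bar{H}}=\mathbf{H}\cdot\mathbf{S}$, with the only real content being the well-definedness of $\mathbf{H}$ on coherent homotopy classes (via Corollary 2.4 of the cited work and the five lemma), exactly as you flag. Nothing is missing; your write-up simply makes explicit what the paper leaves implicit.
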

\begin{theorem} 
If $( {\varphi }_,{\varphi'} ):f\to g$  is such a morphism that $f$ and $g$ are inverse limits of inverse ANR-sequences  $\mathbf{f}= \{ {{f}_{i}},({{p}_{i,i+1}},p_{i,i+1}^{'})~,\mathbb{N} \}$ and $\mathbf{g}= \{ {{g}_{i}},({{q}_{i,i+1}},q_{i,i+1}^{'})~,\mathbb{N} \}$ and the pair $( {\varphi }_,{\varphi'} ):f\to g$ is the pair of inverse limits on inverse ANR-sequence ${\mathbb {\varphi}}= \{ {{\varphi}_{i}},({{p}_{i,i+1}},q_{i,i+1}^{'})~,\mathbb{N} \} : { \mathbf{X} \to \mathbf{Y}}$ and ${\mathbf{\varphi'}}= \{ {{\varphi}_{i}},({{p}_{i,i+1}},q_{i,i+1}^{'})~,\mathbb{N} \}: { \mathbf{X'} \to \mathbf{Y'}}$, then the following sequence 
$$
\begin{matrix}
   0\to Li{{m}^{1}}{{H}_{n+1}}( {{f}_{i}} )\to {{\mathbf{H}}_{\mathbf{n}}}( f )\to Lim~{{H}_{n}}( {{f}_{i}} )~\to 0  \\
   ~~~~~~~\downarrow Li{{m}^{1}} \left( {{\mathbf{\varphi_{i+1} }}},\mathbf{{\varphi'}_{i+1} } \right)_{*}~\downarrow \left( {{\mathbf{\varphi }}},\mathbf{{\varphi}' } \right)_{*}~\downarrow Lim \left( {{\mathbf{\varphi }}}_{i},{\mathbf{{\varphi}'}_{i} } \right)_{*} \\
   0\to Li{{m}^{1}}{{H}_{n+1}}( {{g}_{i}} )\to {{\mathbf{H}}_{\mathbf{n}}}( g )\to Lim~{{H}_{n}}( {{g}_{i}} )~\to 0  \\
\end{matrix}
$$
is exact, where ${{H}_{n}}( {{f}_{i}} )$ and ${{H}_{n}}( {{g}_{i}} )$ are spectral homology groups of ${{f}_{i}}: {X_i \to {X'}_i}$ and $g_i: {Y_I \to {Y'}_i} $, respectively.
\end{theorem}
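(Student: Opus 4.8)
The plan is to recognise the chain map $(\mathbf{p}_\#,\mathbf{p'}_\#)$ as the ``shift minus identity'' endomorphism attached to the inverse sequence of chain cones $\{C_*(f_i^\#),(p_{i,i+1}^\#,{p'}_{i,i+1}^\#)\}$, and then to read the Milnor sequence off the long exact homology sequence of its mapping cone $C_*(\mathbf{p}_\#,\mathbf{p'}_\#)$ together with the standard description of $\operatorname{Lim}$ and $\operatorname{Lim}^1$ as the kernel and cokernel of that endomorphism. The naturality of the resulting sequence with respect to $(\varphi,\varphi')$ is then a matter of comparing two such long exact sequences.

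First I would record the algebraic input. For an inverse sequence of abelian groups $\{A_i,\pi_{i,i+1}\}$ let $d\colon \prod_i A_i \to \prod_i A_i$ be given by $d\{a_i\}=\{\pi_{i,i+1}(a_{i+1})-a_i\}$; then by definition $\ker d=\operatorname{Lim} A_i$ and $\operatorname{coker} d=\operatorname{Lim}^1 A_i$ (the sign convention is irrelevant here). Since $K_n(\mathbf{f})=\prod_i C_n(f_i^\#)$ and direct products of abelian groups are exact, homology commutes with the product, so $H_m(K_*(\mathbf{f}))=\prod_i H_m(C_*(f_i^\#))=\prod_i H_m(f_i)$, and likewise for $\mathbf{X}$ and $\mathbf{X}'$. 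Comparing the defining formula for $(\mathbf{p}_\#,\mathbf{p'}_\#)$ with the formula for $d$ shows that $(\mathbf{p}_\#,\mathbf{p'}_\#)$ is precisely the map $d$ for the sequence $\{C_*(f_i^\#),(p_{i,i+1}^\#,{p'}_{i,i+1}^\#)\}$; hence on homology $(\mathbf{p}_\#,\mathbf{p'}_\#)_*$ is the shift-minus-identity endomorphism of $\prod_i H_m(f_i)$, with kernel $\operatorname{Lim} H_m(f_i)$ and cokernel $\operatorname{Lim}^1 H_m(f_i)$.

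Next I would invoke the long exact homology sequence of the mapping cone $C_*(\mathbf{p}_\#,\mathbf{p'}_\#)$,
$$\cdots \to H_{n+1}(K_*(\mathbf{f})) \xrightarrow{(\mathbf{p}_\#,\mathbf{p'}_\#)_*} H_{n+1}(K_*(\mathbf{f})) \to H_{n+1}(C_*(\mathbf{p}_\#,\mathbf{p'}_\#)) \to H_n(K_*(\mathbf{f})) \xrightarrow{(\mathbf{p}_\#,\mathbf{p'}_\#)_*} H_n(K_*(\mathbf{f})) \to \cdots,$$
and extract from it the short exact sequence exhibiting $H_{n+1}(C_*(\mathbf{p}_\#,\mathbf{p'}_\#))$ as an extension of the kernel of $(\mathbf{p}_\#,\mathbf{p'}_\#)_*$ on $H_n(K_*(\mathbf{f}))$ by its cokernel on $H_{n+1}(K_*(\mathbf{f}))$. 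By the previous step and the identification $\mathbf{H}_n(f)=H_{n+1}(C_*(\mathbf{p}_\#,\mathbf{p'}_\#))$ this reads
$$0\to \operatorname{Lim}^1 H_{n+1}(f_i)\to \mathbf{H}_n(f)\to \operatorname{Lim} H_n(f_i)\to 0,$$
which is exactly the top row; the same argument applied to $\mathbf{g}$ yields the bottom row.

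Finally I would treat the vertical maps. Because by hypothesis $(\varphi,\varphi')$ is the inverse limit of a level morphism $\{(\varphi_i,\varphi'_i)\}$ (the index function is the identity and the higher coherence terms are absent), the induced chain map on $K_*(\mathbf{f})=\prod_i C_*(f_i^\#)$ is simply the product $\prod_i(\varphi_i,\varphi'_i)_\#$, which commutes strictly with the two shift-minus-identity endomorphisms since $\varphi$ and $\varphi'$ are compatible with the bonding maps. Consequently the induced map $(\varphi,\varphi')_*$ on the cones fits into a morphism of the two long exact sequences, and passing to the split short exact sequences produces the two squares of the diagram, the left vertical map being $\operatorname{Lim}^1(\varphi_{i+1},\varphi'_{i+1})_*$ and the right one $\operatorname{Lim}(\varphi_i,\varphi'_i)_*$ by functoriality of $\operatorname{Lim}$ and $\operatorname{Lim}^1$. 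I expect this naturality to be the main point requiring care: one must verify that the middle map induced by the coherent morphism agrees, after passing to homology, with the map induced by $\prod_i(\varphi_i,\varphi'_i)_\#$, so that the comparison of long exact sequences is legitimate. The level hypothesis is precisely what makes the coherence terms drop out and renders this identification routine.
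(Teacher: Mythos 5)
Your proposal is correct and follows essentially the same route as the paper: both identify $(\mathbf{p}_\#,\mathbf{p'}_\#)$ as the shift-minus-identity endomorphism of $K_*(\mathbf{f})=\prod_i C_*(f_i^\#)$, pass to the long exact homology sequence of its mapping cone, use that homology commutes with products to identify the kernel and cokernel of the induced map with $\varprojlim H_n(f_i)$ and $\varprojlim^1 H_{n+1}(f_i)$, and obtain the vertical maps and commutativity from the levelwise maps $\prod_i(\varphi_i,\varphi'_i)_\#$. The paper carries out the shift-minus-identity identification by an explicit computation on homology classes, but the content is the same as your argument.
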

\begin{proof} Let $\left( {{\mathbf{\varphi }}},\mathbf{{\varphi}' } \right) : { \mathbf{f}} \to {\mathbf{g}}$ be the morphism given in the theorem. In this case there exists a corresponding commutative diagram:
$$
\begin{matrix}
   0\to K_{*} \left( \mathbf{f} \right)\overset{\sigma }{\mathop{\to }}\,{{C}_{*}}\left( {{\mathbf{p}}_{\#}},\mathbf{p'}_{\#} \right)\overset{\partial }{\mathop{\to }}\,K_{*}\left( \mathbf{f} \right)\to 0  \\
   ~~~~\downarrow \prod (\mathbf{\varphi_{i}})_{\#}~~~~~~\downarrow \left( {{\mathbf{\varphi }}_{\#}},\mathbf{{\varphi}' }_{\#} \right)~~~\downarrow \prod (\mathbf{\varphi_{i}})_{\#}  \\
   0\to K_{*} \left( \mathbf{g} \right)\overset{\sigma }{\mathop{\to }}\,{{C}_{*}}\left( {{\mathbf{q}}_{\#}},\mathbf{q'}_{\#} \right)\overset{\partial }{\mathop{\to }}\,K_{*}\left( \mathbf{g} \right)\to 0.  \\
\end{matrix}
$$
That induces the following commutative diagram:
$$
\begin{matrix}
   \cdots \to H_{n+1}(\mathbf K_{*}( \mathbf{f})){{\stackrel{(\mathbf p_{*},\mathbf {p'}_{*})}{\longrightarrow}}} H_{n+1}(\mathbf K_{*}( \mathbf{f})){{\stackrel{\sigma
_{*}}{\longrightarrow}}}H_{n+1}(C_{*}(\mathbf p_\#,{\mathbf {p'}}_{\#}))  {{\stackrel{\pi _{*}}{\longrightarrow}}} \\
   ~~~~~~~~~~~~~~~~~~\downarrow \prod (\mathbf{\varphi_{i}})_{*}~~~~~~~~~~~~~~\downarrow \prod (\mathbf{\varphi_{i}})_{*}~~~~~~~~~\downarrow \left( {{\mathbf{\varphi }}_{*}},\mathbf{{\varphi}' }_{*} \right)~~~~~~~~~~~~~~~~ \\
   \cdots \to H_{n+1}(\mathbf K_{*}( \mathbf{g})){{\stackrel{(\mathbf q_{*},\mathbf {q'}_{*})}{\longrightarrow}}} H_{n+1}(\mathbf K_{*}( \mathbf{g})){{\stackrel{\sigma
_{*}}{\longrightarrow}}}H_{n+1}(C_{*}(\mathbf q_\#,{\mathbf {q'}}_{\#}))  {{\stackrel{\pi _{*}}{\longrightarrow}}} \\
\end{matrix}
$$
~~~~~~~~~~~~~~~~~~~~~~~~~~~~~~~~~~~~~~~~~~~~~~~~~~~~~~~~$
\begin{matrix}
   {{\stackrel{\pi _{*}}{\longrightarrow}}}{H_{n}(\mathbf K_{*}( \mathbf{f}))}{{\stackrel{(\mathbf p_{*},\mathbf {p'}_{*})}{\longrightarrow}}}H_{n}(\mathbf K_{*}( \mathbf{f}))\to \cdots   \\
   ~~~~~~~~\downarrow \prod (\mathbf{\varphi_{i}})_{*} ~~~~~~~~~\downarrow \prod (\mathbf{\varphi_{i}})_{*} \\
   {{\stackrel{\pi _{*}}{\longrightarrow}}}{H_{n}(\mathbf K_{*}( \mathbf{g}))}{{\stackrel{(\mathbf q_{*},\mathbf {q'}_{*})}{\longrightarrow}}}H_{n}(\mathbf K_{*}( \mathbf{g}))\to \cdots~~.  \\
\end{matrix}
$

On the other hand, $H_{n}(\mathbf K_{*}( \mathbf{f}))=
H_{n}(\prod{S_{*}(f_{i})})\approx \prod{H_{n}(S_{*}(f_{i})) } =\prod{H_{n}(f_{i}})$ and $H_{n}(\mathbf K_{*}( \mathbf{g}))=
H_{n}(\prod{S_{*}(g_{i})})\approx \prod{H_{n}(S_{*}(g_{i})) } =\prod{H_{n}(g_{i}})$.
Hence, we obtain 
$$
\begin{matrix}
   \cdots \to \prod H_{n+1}(f_i){{\stackrel{(\mathbf p_{*},{\mathbf p'}_{*})}{\longrightarrow}}} H_{n+1}(f_i){{\stackrel{\sigma
_{*}}{\longrightarrow}}} \mathbf{H_{n}}(f){{\stackrel{\pi _{*}}{\longrightarrow}}}  \\
   ~~~~~~~~~~~~~~~~~~~~\downarrow \prod (\mathbf{\varphi_{i}})_{*}~~~~~~~~~\downarrow \prod (\mathbf{\varphi_{i}})_{*}~~~~~~~~~\downarrow \left( {{\mathbf{\varphi }}},\mathbf{{\varphi}' } \right)_{*}~~~~~~~~~~~~~~ \\
   \cdots \to \prod H_{n+1}(g_i){{\stackrel{(\mathbf q_{*},{\mathbf q'}_{*})}{\longrightarrow}}} H_{n+1}(g_i){{\stackrel{\sigma
_{*}}{\longrightarrow}}} \mathbf{H_{n}}(g){{\stackrel{\pi _{*}}{\longrightarrow}}}\\
\end{matrix}
$$
~~~~~~~~~~~~~~~~~~~~~~~~~~~~~~~~~~~~~~~~~~~~~~~~~~~~~~~$
\begin{matrix}
   {{\stackrel{\pi _{*}}{\longrightarrow}}}{H_{n}(f_i}){{\stackrel{(\mathbf p_{*},\mathbf {p'}_{*})}{\longrightarrow}}}H_{n}(f_i)\to \cdots  \\
   ~~~~~~~\downarrow \prod (\mathbf{\varphi_{i}})_{*} ~~~~~~\downarrow \prod (\mathbf{\varphi_{i}})_{*} \\
  {{\stackrel{\pi _{*}}{\longrightarrow}}}{H_{n}(g_i}){{\stackrel{(\mathbf q_{*},\mathbf {q'}_{*})}{\longrightarrow}}}H_{n}(g_i)\to \cdots . \\
\end{matrix}
$

By the definition of the chain map $ (\mathbf{p_{\#}},\mathbf{{p'}_{\#}}): \mathbf {K_{*}( \mathbf {f})} \to  \mathbf {K_{*}( \mathbf {f})}$ we have 

$$\left( p_{*},{p'}_{*} \right) \lbrace \left[ c_i^n,{c'}_i^{n+1} \right] \rbrace=\left( p_{*},{p'}_{*} \right) \left[ \lbrace c_i^n,{c'}_i^{n+1} \rbrace \right]= \left[ \left( p_{*},{p'}_{*} \right) \left( \lbrace c_i^n \rbrace, \lbrace {c'}_i^{n+1} \rbrace \right) \right]=$$
$$\left[ p_{\#} \lbrace c_i^n \rbrace ,{p'}_{\#} \lbrace {c'}_i^{n+1} \rbrace) \right] =\left[ \lbrace p_{i,i+1}^{\#}(c_{i+1}^n)-{c}_i^n \rbrace, \lbrace {p'}_{i,i+1}^{\#} \left( c_{i+1}^{n+1} \right) - {c'}_i^{n+1} \rbrace \right]=$$
$$\left( \lbrace p_{i,i+1}^{*} \left[ c_{i+1}^n \right]- \left[ {c}_i^n \right]  \rbrace, \lbrace {p'}_{i,i+1}^{*}\left[ c_{i+1}^{n+1} \right] - \left[ {c'}_i^{n+1} \right]  \rbrace \right)=$$
$$ \lbrace \left( p_{i,i+1}^{*}, {p'}_{i,i+1}^{*} \right) \left( \left[ c_{i+1}^n, {c'}_{i+1}^{n+1} \right] \right) - \left[ c_{i}^n, {c'}_{i}^{n+1} \right] \rbrace$$

$$ \forall  {\lbrace \left[ c_{i}^n, {c'}_{i}^{n+1} \right] \rbrace}  \in \prod H_{n+1}(f_i).$$
Therefore, $Coker \left( p_{*}, {p'}_{*} \right)={\varprojlim}^{1} H_{n+1}(f_{i})$  is first derivative of inverse limit of inverse sequence $\{ H_{n+1} \left( f_i \right) , \left(p_{i,i+1}^{*}, {p'}_{i,i+1}^{*} \right) , \mathbb{N} \} $. 
On the other hand, we have
$$
\left( p_{*},{p'}_{*} \right) \lbrace \left[ c_i^{n-1},{c'}_i^n \right] \rbrace=\lbrace \left( p_{i,i+1}^{*}, {p'}_{i,i+1}^{*} \right) \left[ c_{i+1}^{n-1}, {c'}_{i+1}^n \right] - \left[ c_{i}^{n-1}, {c'}_{i}^n \right] \rbrace,$$
$$ \forall  {\lbrace \left[ c_{i}^{n-1}, {c'}_{i}^n \right] \rbrace}  \in \prod H_{n}(f_i).$$
Consequently, $Ker \left( p_{*}, {p'}_{*} \right) = \varprojlim H_{n}(f_{i})$ is the inverse limit of inverse sequence $ \left( H_{n} \left( f_i \right) , \\ 
\{ p_{i,i+1}^{*}, q_{i,i+1}^{*} \right), \mathbb{N} \} $. The same way we will show that $Coker \left( q_{*}, {q'}_{*} \right)={\varprojlim}^{1} H_{n+1}(g_{i})$ and $Ker \left( q_{*}, {q'}_{*} \right) = \varprojlim H_{n}(g_{i})$ . Therefore, for each $n\in \mathbb{N}$ we have
$$
\begin{matrix}
   0\to Li{{m}^{1}}{{H}_{n+1}}( {{f}_{i}} )\to {{\mathbf{H}}_{\mathbf{n}}}( f )\to Lim~{{H}_{n}}( {{f}_{i}} )~\to 0  \\
   ~~~~~~~~~~~\downarrow Li{{m}^{1}} \left( {{\mathbf{\varphi_{i+1} }}},\mathbf{{\varphi'}_{i+1} } \right)_{*}~\downarrow \left( {{\mathbf{\varphi }}},\mathbf{{\varphi}' } \right)_{*}~~~~~\downarrow Lim \left( {{\mathbf{\varphi }}}_{i},{\mathbf{{\varphi}'}_{i} } \right)_{*} \\
   0\to Li{{m}^{1}}{{H}_{n+1}}( {{g}_{i}} )\to {{\mathbf{H}}_{\mathbf{n}}}( g )\to Lim~{{H}_{n}}( {{g}_{i}} )~\to 0  \\
\end{matrix}
.$$

\end{proof}

{\bf Conjecture.}  The homology functor  $\mathbf{\bar{H}}: \mathbf{Mo{{r}_{CM}}}\to \mathbf{Ab}$ from the category of continuous maps of compact metric spaces to the category of direct sequences of Abelian groups is unique up to isomorphism if and only if the following is fulfilled:

1) For each continuous map $f\in Mo{{r}_{CM}}$ of compact metric spaces there exist homomorphisms $c:{{\mathbf{ \bar{H}}}_{\mathbf{n}}}( C(f) )\to {{\mathbf{\bar{H}}}_{\mathbf{n}}}(  f  )$,  $\sigma _{*}^{'}:{{\mathbf{\bar{H}}}_{\mathbf{n}}}( {{X}'} )\to {{\mathbf{\bar{H}}}_{\mathbf{n}}}( C( f ) )$ and $~\partial _{*}^{'}:{{\mathbf{\bar{H}}}_{\mathbf{n}}}( C( f ) )\to {{\mathbf{\bar{H}}}_{\mathbf{n}-1}}( X )$ such that the following diagram is commutative:
$$
\begin{matrix}
   \cdots \to {{\mathbf{\bar{H}}}_{\mathbf{n}}}( {{X}'} )\overset{{{\sigma' }_{*}}}{\mathop{\to }}\,{{\mathbf{\bar{H}}}_{\mathbf{n}}}( C(f) )\overset{{{\partial' }_{*}}}{\mathop{\to }}\,{{\mathbf{\bar{H}}}_{\mathbf{n}-1}}( X )\overset{E}{\mathop{\to }}\,{{\mathbf{H}}_{\mathbf{n}-1}}( {{X}'} )\to \cdots \\
   ~~~~~~~\downarrow {id} ~~~~~~~~~~\downarrow c_{*}~~~~~~~~~~\downarrow {id}~~~~~~~~~\downarrow {id} ~~~~~~~ \\
   \cdots \to {{\mathbf{\bar{H}}}_{\mathbf{n}}}( {{X}'} )\overset{{{\sigma }_{*}}}{\mathop{\to }}\,{{\mathbf{\bar{H}}}_{\mathbf{n}}}( f )\overset{{{\partial }_{*}}}{\mathop{~\to~ }}\,{{\mathbf{\bar{H}}}_{\mathbf{n}-1}}( X )\overset{E}{\mathop{\to }}\,{{\mathbf{\bar{H}}}_{\mathbf{n}-1}}( {{X}'} )\to \cdots ,\\
\end{matrix}
$$
where $C(f)$ is cone of the continuous map $f:X \to Y$.

2) For each continuous map $f\in \mathbf{Mo{{r}_{CM}}}$ of compact metric spaces the corresponding homological sequence
	$$\cdots \to {{\mathbf{\bar{H}}}_{\mathbf{n}}}( {{X}'} )\overset{{{\sigma }_{*}}}{\mathop{\to }}\,{{\mathbf{\bar{H}}}_{\mathbf{n}}}( f )\overset{{{\partial }_{*}}}{\mathop{\to }}\,{{\mathbf{\bar{H}}}_{\mathbf{n}-1}}( X )\overset{E}{\mathop{\to }}\,{{\mathbf{\bar{H}}}_{\mathbf{n}-1}}( {{X}'} )\to \cdots $$
is exact.

3) If morphisms  $( {{\varphi }_{1}},\varphi _{1}^{'} ),( {{\varphi }_{2}},\varphi _{2}^{'} ):f\to g$ induce the same strong shape morphisms, then
$${{( {{\varphi }_{1}},\varphi _{1}^{'} )}_{*}}={{( {{\varphi }_{2}},\varphi _{2}^{'} )}_{*}}:\mathbf{\bar{H}}( f )\to \mathbf{\bar{H}}( g ).$$                 

4) If $( {\varphi }_,{\varphi'} ):f\to g$  is such a morphism that $f$ and $g$ are inverse limits of inverse ANR-sequences  $\mathbf{f}= \{ {{f}_{i}},({{p}_{i,i+1}},p_{i,i+1}^{'})~,\mathbb{N} \}$ and $\mathbf{g}= \{ {{g}_{i}},({{q}_{i,i+1}},q_{i,i+1}^{'})~,\mathbb{N} \}$ and the pair $( {\varphi }_,{\varphi'} ):f\to g$ is the pair of inverse limits on inverse ANR-sequence ${\mathbb {\varphi}}= \{ {{\varphi}_{i}},({{p}_{i,i+1}},q_{i,i+1}^{'})~,\mathbb{N} \} : { \mathbf{X} \to \mathbf{Y}}$ and ${\mathbf{\varphi'}}= \{ {{\varphi}_{i}},({{p}_{i,i+1}},q_{i,i+1}^{'})~,\mathbb{N} \}: { \mathbf{X'} \to \mathbf{Y'}}$ then, the following sequence 
$$
\begin{matrix}
   0\to Li{{m}^{1}}{{H}_{n+1}}( {{f}_{i}} )\to {{\mathbf{\bar{H}}}_{\mathbf{n}}}( f )\to Lim~{{H}_{n}}( {{f}_{i}} )~\to 0  \\
   ~~~~~~~\downarrow Li{{m}^{1}} \left( {{\mathbf{\varphi_{i+1} }}},\mathbf{{\varphi'}_{i+1} } \right)_{*}~\downarrow \left( {{\mathbf{\varphi }}},\mathbf{{\varphi}' } \right)_{*}~\downarrow Lim \left( {{\mathbf{\varphi }}}_{i},{\mathbf{{\varphi}'}_{i} } \right)_{*} \\
   0\to Li{{m}^{1}}{{H}_{n+1}}( {{g}_{i}} )\to {{\mathbf{\bar{H}}}_{\mathbf{n}}}( g )\to Lim~{{H}_{n}}( {{g}_{i}} )~\to 0  \\
\end{matrix}
$$
is exact.

\end{document}